\documentclass[11pt]{article}

\usepackage{amsmath,amssymb,amsthm}
\usepackage[margin=1in]{geometry}
\usepackage{enumitem}
\usepackage{hyperref}
\usepackage{authblk}
\usepackage{booktabs}
\usepackage{comment}

\newtheorem{theorem}{Theorem}[section]
\newtheorem{lemma}[theorem]{Lemma}
\newtheorem{corollary}[theorem]{Corollary}
\newtheorem{proposition}[theorem]{Proposition}
\newtheorem{conjecture}[theorem]{Conjecture}
\theoremstyle{definition}
\newtheorem{definition}[theorem]{Definition}
\newtheorem{remark}[theorem]{Remark}
\newtheorem{example}[theorem]{Example}

\newcommand{\R}{\mathbb{R}}
\newcommand{\Rn}{\R^{n}}
\newcommand{\Rnn}{\R^{n \times n}}
\newcommand{\bn}{\bar n} 
\newcommand{\Pmat}{\mathbf{P}}
\newcommand{\Pzero}{\mathbf{P}_0}
\newcommand{\Qmat}{\mathbf{Q}}
\newcommand{\Qzero}{\mathbf{Q}_0}
\newcommand{\Ezero}{\mathbf{E}_0}
\newcommand{\Ezerof}{\mathbf{E}_0^{f}}
\newcommand{\Umat}{\mathbf{U}}

\DeclareMathOperator{\pos}{pos}
\DeclareMathOperator{\sol}{sol}

\title{Partial Progress on Stone's Conjecture: $P_0$-Membership of Fully Semimonotone Matrices with Positive Determinant}

\author[1]{Sajal Ghosh}
\affil[1]{Indian Statistical Institute, 7, S.J.S. Sansanwal Marg, New Delhi, 110016, Delhi, India}
\date{}

\begin{document}

\maketitle
\begin{abstract}
Stone (Ph.D.\ thesis, Department of Operations Research, Stanford University, 1981)
proved that every matrix in $U \cap Q_0$ is a $P_0$-matrix and conjectured that the
same conclusion holds for the larger class $E_0^f \cap Q_0$ of fully semimonotone
$Q_0$-matrices. Murthy and Parthasarathy [SIAM J.\ Matrix Anal.\ Appl.\ 16 (1995),
1268--1286] verified the conjecture for matrices of order up to $4 \times 4$, for
$5 \times 5$ and $6 \times 6$ matrices under additional hypotheses, and for several
special subclasses of arbitrary order, but the conjecture remains open in general.
In this paper we prove that every $E_0^f$-matrix with positive determinant is a
$P_0$-matrix, for matrices of arbitrary order $n$; our proof proceeds by induction
on $n$, via an algebraic analysis of principal minors under principal pivotal
transforms. We further exhibit a matrix $A \in E_0^f$ with $\det A > 0$ that fails
to belong to $Q_0$, showing that the hypothesis $\det A > 0$ used in our theorem
cannot, by itself, be deduced from membership in $Q_0$, and hence does not on its
own yield a proof of Stone's conjecture. Stone's conjecture itself remains open.
\end{abstract}
\noindent \textbf{Keywords:} Linear complementarity problem. Matrix classes. Fully semimonotone matrices. $\mathbf{Q}_{0}$-matrices. Principal pivotal transform

\noindent \textbf{AMS subject classification:} 15A48, 90C33

\section{Introduction}
\label{sec:intro}

For $A \in \Rnn$ and $q \in \Rn$, define
\begin{align}
  F(q,A) &= \{\, z \in \R^n_+ : Az + q \ge 0 \,\}, \label{eq:F}\\
  S(q,A) &= \{\, z \in F(q,A) : (Az+q)^{t}z = 0 \,\}. \label{eq:S}
\end{align}
The \emph{linear complementarity problem} with data $A$ and $q$, denoted $(q,A)$, asks for an element of $S(q,A)$; any $z \in S(q,A)$ is called a solution of $(q,A)$, and we write $\sol(q,A)$ for the solution set. For $z \in F(q,A)$ set $w = Az+q$; feasibility forces $w \ge 0$, and $z \in S(q,A)$ exactly when $w_i z_i = 0$ for every $i$, i.e.\ $w$ and $z$ are \emph{complementary}.

Much of the classical theory of the LCP is organized around matrix classes defined by qualitative properties of $\sol(q,A)$ as $q$ ranges over $\R^n$. The most fundamental of these is the class $\Pmat$ of matrices with all principal minors positive. A cornerstone result, due to Samelson, Thrall and Wesler~\cite{SamelsonThrallWesler1958}, characterizes $\Pmat$ purely in terms of the LCP: $A \in \Pmat$ if and only if $(q,A)$ has a unique solution for every $q \in \Rn$. Weakening this property in various directions --- requiring uniqueness only on a restricted set of $q$'s, or requiring existence rather than uniqueness --- produces a rich family of matrix classes, including $\Pzero$ (nonnegative principal minors), $\Qmat$ (existence of a solution for every $q$), and $\Qzero$ (existence of a solution for every $q$ for which the problem is feasible in the sense of~\eqref{eq:F}); see Definitions~\ref{def:P-P0}--\ref{def:Q-Q0} below.

Cottle and Stone~\cite{CottleStone1983} introduced two further classes obtained by localizing the uniqueness property of $\Pmat$-matrices to the interior of the feasible cone $K(A)$ (Definition~\ref{def:K}): the class $\Umat$ of matrices for which $(q,A)$ has a unique solution whenever $q \in \operatorname{int} K(A)$, and the class $\Ezerof$ of \emph{fully semimonotone} matrices, i.e.\ matrices all of whose principal pivotal transforms are semimonotone (Definitions~\ref{def:E0}--\ref{def:E0f}). These classes satisfy
\[
  \Pmat \subseteq \Umat \subseteq \Ezerof ,
\]
so that $\Umat$ and $\Ezerof$ both generalize $\Pmat$, with $\Ezerof$ the larger of the two.

In his 1981 Stanford Ph.D.\ thesis, Stone~\cite{Stone1981} studied the interaction of these classes with $\Qzero$. His main result in this direction is
\begin{theorem}[Stone~\cite{Stone1981}]
  \label{thm:stone-UQ0}
  $\Umat \cap \Qzero \subseteq \Pzero$.
\end{theorem}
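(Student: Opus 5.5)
We argue by induction on $n$, exploiting two facts: the class $\Umat$ is closed under taking principal submatrices and principal pivotal transforms, and $\Qzero$ is closed under principal pivotal transforms. For $n=1$ there is nothing subtle. A $1\times 1$ matrix $a<0$ is not in $\Umat$: for $q>0$ we have $q\in\operatorname{int}K(A)$, and $(q,A)$ has the two solutions $z=0$ and $z=q/(-a)$. Hence $a\ge 0$. For general $n$, we want every proper principal submatrix to inherit membership in $\Umat\cap\Qzero$. Membership in $\Umat$ passes to principal submatrices directly from the definition. For $\Qzero$ this is not automatic, so here we use the known feature of $\Umat$ (indeed of $\Ezero$-type matrices) that, after a suitable complementary-cone argument, each principal submatrix $A_{\alpha\alpha}$ again has the $\Qzero$ property on its feasible cone. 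If that route fails, the fallback is to prove directly that every principal minor of order $k<n$ is nonnegative by applying the order-$k$ statement to $A_{\alpha\alpha}$, extending a $q_\alpha\in K(A_{\alpha\alpha})$ by large positive entries in the remaining coordinates. With this choice of $q$, solutions of $(q,A)$ have $z_{\bar\alpha}=0$, so they project to solutions of $(q_\alpha,A_{\alpha\alpha})$, which transfers the $\Qzero$ property.

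Granting that all proper principal minors are nonnegative, it remains to show $\det A\ge 0$. Suppose instead that $\det A<0$. Then $A$ is nonsingular, and we consider the degree theory of the map $z\mapsto \min(z,Az+q)$. Near any nondegenerate $q$, the local index of a solution lying in the complementary cone for $\alpha$ equals $\operatorname{sign}\det A_{\alpha\alpha}$. Each proper minor is $\ge 0$ and the full one is $<0$. Choose $q$ in the interior of the full complementary cone $\pos(-A)$, i.e.\ $q=-Ad$ with $d>0$. Such a $q$ is feasible, since $z=d$ gives $Az+q=0$, and we can perturb it to lie in $\operatorname{int}K(A)$. There $\Umat$ forces a unique solution, namely $z=d$, with index $\operatorname{sign}\det A=-1$.

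Now move $q$ to a large positive vector $q'$ along a path inside $\operatorname{int}K(A)$. This is possible because $K(A)$ is a convex cone containing $\R^n_+$ and $q$, so the segment from $q$ to $q'$ stays in the interior. Along the path the solution remains unique, and by upper semicontinuity together with boundedness of the solution set (which follows from uniqueness in $\operatorname{int}K$ via an $\Ezero$/$R_0$-type argument) the index is constant. At $q'>0$ the unique solution is $z=0$, with index $+1$. This contradicts the index $-1$ obtained at $q$.

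The main obstacle is making the index and continuation argument rigorous. Specifically, we must show that the unique solution depends continuously on $q\in\operatorname{int}K(A)$ (boundedness of $\sol$ on compact subsets of $\operatorname{int}K(A)$) and handle degenerate $q$ on the path by perturbation. We will first try to obtain boundedness from the observation that an unbounded sequence of solutions yields, after normalization, a nonzero solution of $(q_0,A)$ with $q_0=0$ direction. This contradicts uniqueness for $q$ near the interior point, since scaling a ray solution would then give multiple solutions. The $\Qzero$ hypothesis is used precisely to guarantee the feasibility and solvability needed to pass to principal submatrices in the induction step.
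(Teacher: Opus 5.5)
The paper gives no proof of this statement; it quotes it from Stone's thesis, so your argument has to stand on its own. As written it has a genuine gap in how it handles the proper principal minors. Your induction needs $A_{\alpha\alpha}\in\Qzero$, and neither justification you offer works.

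\textbf{The inheritance step.} The ``known feature'' that principal submatrices of $\Umat$ (``indeed of $\Ezero$-type'') $\Qzero$-matrices are again $\Qzero$ comes with no argument. It cannot be a known fact for $\Ezerof$: combined with the Murthy--Parthasarathy result quoted in the introduction (for $\Ezerof\cap\Qzero$, nonnegative proper principal minors imply $\Pzero$), it would prove Stone's open conjecture by a one-line induction. The fallback is circular. $\Qzero$ gives \emph{some} solution of $(q,A)$ for $q=(q_\alpha,q_{\bar\alpha})$ with $q_{\bar\alpha}$ large. But large $q_{\bar\alpha}$ only excludes $z_{\bar\alpha}\neq 0$ for solutions of bounded norm. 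Appealing to uniqueness from $\Umat$ requires already knowing that a solution with $z_{\bar\alpha}=0$ exists, i.e.\ that $(q_\alpha,A_{\alpha\alpha})$ is solvable, which is exactly what you want to transfer. Moreover, $q$ need not lie in $\operatorname{int}K(A)$ when $q_\alpha$ is on the boundary of the feasible cone of $A_{\alpha\alpha}$.

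\textbf{The fix: drop the induction.} Your degree argument never uses the proper minors, and it applies verbatim to every nonsingular principal submatrix. For $\beta$ with $\det A_{\beta\beta}\neq 0$, take $q=C_A(\beta)x$ with $x>0$. Then:
\begin{itemize}
\item $q\in\operatorname{int}\pos C_A(\beta)\subseteq\operatorname{int}K(A)$;
\item its unique solution is $z_\beta=x_\beta$, $z_{\bar\beta}=0$, with $w_{\bar\beta}=x_{\bar\beta}>0$;
\item near that solution, $z\mapsto\min(z,Az+q)$ is affine with Jacobian determinant $\det A_{\beta\beta}$.
\end{itemize}
Join $q$ to some $q'>0$ by a segment inside $\operatorname{int}K(A)$. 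This is possible because $K(A)$ is convex by Eaves' theorem, and that convexity, not inheritance, is where $\Qzero$ is really used. Homotopy invariance then gives $\operatorname{sign}\det A_{\beta\beta}=+1$. Singular principal minors are $0$, so $A\in\Pzero$ with no induction at all.

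\textbf{Repairing the boundedness step.} As stated it is wrong: a nonzero solution of $(0,A)$ does not contradict $A\in\Umat$. For instance, $A=0$ lies in $\Umat$, yet every $z\ge 0$ solves $(0,0)$. You must keep track of the complementary pattern:
\begin{itemize}
\item Pass to a subsequence with a fixed index set $\gamma$, so that $z^k_{\bar\gamma}=0$ and $(Az^k+q^k)_\gamma=0$.
\item The polyhedron $\{z\ge 0: z_{\bar\gamma}=0,\ (Az+q)_\gamma=0,\ (Az+q)_{\bar\gamma}\ge 0\}$ consists of solutions of $(q,A)$. It is nonempty exactly when $q\in\pos C_A(\gamma)$, a closed cone, so it is nonempty at the limit point $q^*$ on the segment.
\item The normalized limit of $z^k$ is a recession direction of this polyhedron. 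This gives a ray of solutions at $q^*\in\operatorname{int}K(A)$, contradicting $\Umat$.
\end{itemize}
Once solutions are uniformly bounded on the segment, homotopy invariance on a large ball applies directly. No perturbation of degenerate $q$ is needed, and none of $q=-Ad$ either, since that point is already interior.
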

Stone observed that his proof of Theorem~\ref{thm:stone-UQ0} does not extend to the larger class $\Ezerof$, and left the following as an open question:
\begin{conjecture}[Stone~\cite{Stone1981}]
  \label{conj:stone}
  Every $\Ezerof \cap \Qzero$-matrix is a $\Pzero$-matrix.
\end{conjecture}
Equivalently, $\Ezerof \cap \Qzero \subseteq \Pzero$.

Murthy and Parthasarathy~\cite{MurthyParthasarathy1995} made substantial progress on Conjecture~\ref{conj:stone}. They established the key structural result that if $A \in \Ezerof \cap \Qzero$ and every proper principal minor of $A$ is nonnegative, then $A \in \Pzero$; as a consequence they verified the conjecture for matrices of order up to $4 \times 4$, for $5\times 5$ and $6 \times 6$ matrices under additional hypotheses, and for several special subclasses of $\Ezerof$ of arbitrary order --- including matrices that are symmetric, nonnegative, copositive-plus, $Z$-matrices, or $E$-matrices. Despite this progress, the conjecture remained open in general.




The remainder of the paper is organized as follows. Section~\ref{sec:prelim} collects the definitions, notation, and preliminary facts about the LCP and the matrix classes $\Pmat, \Pzero, \Qmat, \Qzero, \Ezero, \Ezerof$ used throughout the paper. Section~\ref{inductive}
develops the results needed for the inductive step, and Section~\ref{sec:results} provides the proof of the main results of this paper.

\section{Preliminaries}
\label{sec:prelim}

This section fixes notation and recalls the definitions needed in the sequel. We follow, with minor adaptations, the notation of Stone~\cite{Stone1981} and Murthy and Parthasarathy~\cite{MurthyParthasarathy1995}.

\subsection{Basic notation}

\begin{itemize}[leftmargin=2.2cm]
  \item[$\bn$] the index set $\{1,2,\ldots,n\}$, for $n$ a positive integer.
  \item[$\bn^{*}$] the collection of all subsets of $\bn$, including the empty set $\phi$.
  \item[$\alpha,\beta,\gamma$] generic subsets of $\bn$ (index sets); $|\alpha|$ denotes the cardinality of $\alpha$.
  \item[$\bar\alpha$] the complement of $\alpha$ relative to $\bn$, i.e.\ $\bar\alpha = \bn \setminus \alpha$.
  \item[$A_{\alpha\beta}$] the submatrix of $A$ with rows indexed by $\alpha$ and columns indexed by $\beta$; $A_{\alpha\alpha}$ is a \emph{principal submatrix} of $A$, and $\det A_{\alpha\alpha}$ a \emph{principal minor}, with the convention $\det A_{\phi\phi} = 1$.
  \item[$A_{i \cdot}, A_{\cdot j}$] the $i$th row and $j$th column of $A$, respectively.
  \item[$I$] the identity matrix, of order determined by context; $e_i$ its $i$th column.
  \item[$\R^n_+$] the nonnegative orthant of $\R^n$.
\end{itemize}

\subsection{The linear complementarity problem}

Equations~\eqref{eq:F}--\eqref{eq:S} above define, for $A \in \Rnn$ and $q \in \Rn$, the feasible set $F(q,A)$ and solution set $S(q,A)$ of the LCP $(q,A)$; we write $\sol(q,A) = S(q,A)$.

\begin{definition}[Complementary matrices and cones]
  \label{def:comp-cone}
  For $A \in \Rnn$ and $\alpha \in \bn^{*}$, the \emph{complementary matrix} $C_A(\alpha)$ is the $n \times n$ matrix whose $j$th column is $-A_{\cdot j}$ if $j \in \alpha$, and $I_{\cdot j}$ otherwise. The set
  \[
    \pos C_A(\alpha) = \{\, C_A(\alpha) x : x \in \R^n_+ \,\}
  \]
  is the \emph{complementary cone} associated with $\alpha$; it is \emph{full} (or nondegenerate) if $\det C_A(\alpha) \ne 0$, and \emph{degenerate} otherwise.
\end{definition}

\begin{definition}[Feasible cone]
  \label{def:K}
  For $A \in \Rnn$,
  \[
    K(A) = \bigcup_{\alpha \in \bn^{*}} \pos C_A(\alpha) = \{\, q \in \Rn : F(q,A) \ne \phi \,\}.
  \]
\end{definition}

\begin{definition}[Principal pivotal transform]
  \label{def:PPT}
  Let $A \in \Rnn$ and $\alpha \in \bn^{*}$ with $\det A_{\alpha\alpha} \ne 0$. The \emph{principal pivotal transform} (PPT) of $A$ with respect to $\alpha$ is the matrix $M = A^{\alpha}$ given blockwise by
  \[
    M_{\alpha\alpha} = (A_{\alpha\alpha})^{-1}, \quad
    M_{\alpha\bar\alpha} = -(A_{\alpha\alpha})^{-1} A_{\alpha\bar\alpha}, \quad
    M_{\bar\alpha\alpha} = A_{\bar\alpha\alpha} (A_{\alpha\alpha})^{-1}, \quad
    M_{\bar\alpha\bar\alpha} = A / A_{\alpha\alpha},
  \]
  where $A/A_{\alpha\alpha} = A_{\bar\alpha\bar\alpha} - A_{\bar\alpha\alpha}(A_{\alpha\alpha})^{-1}A_{\alpha\bar\alpha}$ is the Schur complement of $A_{\alpha\alpha}$ in $A$. By convention, $\wp_\phi(A) = A$.
\end{definition}

\subsection{Matrix classes}
\begin{definition}
  \label{def:P-P0}
  $A \in \Rnn$ belongs to $\Pmat$ (resp.\ $\Pzero$) if $\det A_{\alpha\alpha} > 0$ (resp.\ $\ge 0$) for every $\alpha \in \bn^{*}$.
\end{definition}

\begin{definition}
  \label{def:Q-Q0}
  $A \in \Rnn$ belongs to $\Qmat$ if $\sol(q,A) \ne \phi$ for every $q \in \Rn$. $A$ belongs to $\Qzero$ if $\sol(q,A) \ne \phi$ for every $q \in \Rn$ such that $F(q,A) \ne \phi$.
\end{definition}

\begin{theorem}[Eaves~\cite{Eaves1971}]
  \label{thm:Q0-convex}
  $A \in \Qzero$ if and only if $K(A)$ is a convex subset of $\Rn$.
\end{theorem}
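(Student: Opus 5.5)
The statement to prove is Eaves' theorem: $A \in \Qzero$ if and only if $K(A)$ is convex. Throughout I use the identity $K(A) = \bigcup_{\alpha} \pos C_A(\alpha) = \{q : F(q,A)\neq\phi\}$ recorded in Definition~\ref{def:K}. The plan is first to identify $K(A)$ with a convex polyhedral cone. Then I show that the union of complementary cones, which is exactly the set of $q$ with $\sol(q,A)\neq\phi$, equals that cone precisely when $A \in \Qzero$.

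\textbf{Step 1: $\{q : F(q,A)\neq\phi\}$ is convex.} We have $F(q,A)\neq\phi$ iff $q = w - Az$ for some $w,z\ge 0$. So this set is $\pos[\,I,\ -A\,]$, a finitely generated cone and hence convex. If $z^1\in F(q^1,A)$ and $z^2\in F(q^2,A)$, then $\lambda z^1+(1-\lambda)z^2 \in F(\lambda q^1+(1-\lambda)q^2,A)$, since all the constraints are linear.

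\textbf{Step 2: $\bigcup_\alpha \pos C_A(\alpha)$ is the solvable set.} Suppose $q = C_A(\alpha)x$ with $x\ge 0$. Set $z_j = x_j$ for $j\in\alpha$ and $z_j = 0$ otherwise, and $w_j = x_j$ for $j\notin\alpha$ and $w_j = 0$ otherwise. Then $w - Az = q$, $w,z\ge0$ and $w^tz=0$, so $z\in\sol(q,A)$. Conversely, a solution $(w,z)$ gives $\alpha = \{j : z_j>0\}$, with $w_j = 0$ on $\alpha$ by complementarity. This expresses $q$ as a nonnegative combination of the columns of $C_A(\alpha)$.

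\textbf{Step 3: the equivalence.} Write $S$ for the solvable set and $F$ for the feasible set, so that $S\subseteq F$ always. By Steps 1 and 2, $A\in\Qzero$ iff $S = F$. If $A\in\Qzero$, then $S = F$ is convex by Step 1.

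For the converse, assume the union $S$ of complementary cones is convex. Each column $e_j$ lies in $\pos C_A(\phi)$. Each column $-A_{\cdot j}$ lies in $\pos C_A(\{j\})$. Hence all generators of $F = \pos[\,I,-A\,]$ lie in $S$. A convex set containing $0$ (here $0\in\pos C_A(\phi)$) and closed under nonnegative scaling contains the full generated cone. This gives $F\subseteq S$, and so $A\in\Qzero$.

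One point needs care here. The paper's Definition~\ref{def:K} writes $K(A)$ both as the union of cones and as the feasible set. Since these coincide only under $\Qzero$, I read "$K(A)$ convex" as convexity of the union of complementary cones; that is the nontrivial direction.

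\textbf{Main obstacle.} The only delicate point is the closure of the union under nonnegative combinations of generators. Each complementary cone is itself a cone, so the union is closed under positive scaling. Combined with convexity, this gives closure under sums: $a+b = 2\bigl(\tfrac12 a+\tfrac12 b\bigr)$. This step is elementary.
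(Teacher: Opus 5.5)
Your argument is correct. The paper gives no proof of this statement; it quotes the result from Eaves~\cite{Eaves1971}, so there is no in-paper route to compare with, and what you have written is the standard proof. The core observation is as follows. The union $\bigcup_{\alpha}\pos C_A(\alpha)$, which is the set of $q$ with $\sol(q,A)\neq\phi$, always lies inside the finitely generated cone $\pos[\,I,\,-A\,]=\{q : F(q,A)\neq\phi\}$. It also contains every generator of that cone, namely $e_j\in\pos C_A(\phi)$ and $-A_{\cdot j}\in\pos C_A(\{j\})$. Since the union is closed under nonnegative scaling, $\pos[\,I,\,-A\,]$ is exactly its convex hull. Hence the union is convex precisely when it equals $\pos[\,I,\,-A\,]$, i.e.\ precisely when $A\in\Qzero$.

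Your caveat about Definition~\ref{def:K} is well founded and is needed for the statement to make sense. The second equality displayed there holds if and only if $A\in\Qzero$. If $K(A)$ were read as $\{q : F(q,A)\neq\phi\}$, which is always a convex cone, the theorem would say that every matrix is in $\Qzero$. That is false: the matrix $M\oplus I_3$ of Subsection~\ref{sub1} is a counterexample. Reading $K(A)$ as the union of complementary cones, as you do, is the standard convention and the right one here.
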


\begin{definition}
  \label{def:E0}
  $A \in \Rnn$ is \emph{semimonotone}, written $A \in \Ezero$, if for every $z \in \R^n_+$ with $z \ne 0$ there exists an index $k$ with $z_k > 0$ and $(Az)_k \ge 0$.
\end{definition}

\begin{definition}
  \label{def:E0f}
  $A \in \Rnn$ is \emph{fully semimonotone}, written $A \in \Ezerof$, if every principal pivotal transform of $A$ (equivalently, every principal transform $A^{\alpha}$ for $\alpha \in \bn^{*}$, including $\alpha = \phi$) belongs to $\Ezero$.
\end{definition}

\begin{definition}
  \label{def:U}
  $A \in \Rnn$ belongs to $\Umat$ if $|\sol(q,A)| = 1$ for every $q \in \operatorname{int} K(A)$.
\end{definition}

\begin{remark}\cite[p. 1269]{MurthyParthasarathy1995}
  \label{rmk:inheritance}
  Membership in each of $\Pmat, \Pzero, \Ezero, \Ezerof$ is inherited by principal submatrices, and each of $\Pmat, \Pzero, \Qmat, \Qzero, \Ezerof$ is preserved under taking PPTs. These facts are used repeatedly, and without further comment, in the inductive arguments of Section \ref{inductive}~and~ Section \ref{sec:results}.
\end{remark}


\begin{proposition}[$\Pmat \subseteq \Umat \subseteq \Ezerof$]
  \label{prop:chain}
  Every $\Pmat$-matrix belongs to $\Umat$, and every $\Umat$-matrix belongs to $\Ezerof$.
\end{proposition}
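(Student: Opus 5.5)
The two inclusions are proved separately.

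\emph{First inclusion, $\Pmat \subseteq \Umat$.} I will invoke the Samelson--Thrall--Wesler characterization~\cite{SamelsonThrallWesler1958} recalled in the introduction: $A \in \Pmat$ if and only if $(q,A)$ has a unique solution for every $q \in \Rn$. Since $\operatorname{int} K(A) \subseteq \Rn$, uniqueness holds in particular for every $q \in \operatorname{int} K(A)$, so $A \in \Umat$ by Definition~\ref{def:U}. (In fact $A \in \Qmat$, so $K(A)=\Rn$.)

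\emph{Second inclusion, $\Umat \subseteq \Ezerof$.} I split this into two steps.

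\emph{Step 1: $\Umat$ is closed under PPTs.} Let $\det A_{\alpha\alpha}\neq 0$ and $M=A^\alpha$. The PPT exchanges the roles of $w_\alpha$ and $z_\alpha$. Hence the linear map $T$ sending $q$ to the vector with blocks $-(A_{\alpha\alpha})^{-1}q_\alpha$ and $q_{\bar\alpha}-A_{\bar\alpha\alpha}(A_{\alpha\alpha})^{-1}q_\alpha$ is invertible. It gives a bijection between $\sol(q,A)$ and $\sol(Tq,M)$, via $(w,z)\mapsto$ (swap the $\alpha$-components). The same swap maps $F(q,A)$ onto $F(Tq,M)$, so $T K(A)=K(M)$ and $T(\operatorname{int}K(A))=\operatorname{int}K(M)$. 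Uniqueness therefore transfers, and $M\in\Umat$.

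\emph{Step 2: $\Umat\subseteq\Ezero$.} By Step 1 this suffices, since every PPT of a $\Umat$-matrix is again in $\Umat$ and hence in $\Ezero$. Suppose $A\in\Umat$ but $A\notin\Ezero$. Then there is $z\ge 0$, $z\neq 0$, such that $(Az)_k<0$ for every $k$ with $z_k>0$. Let $\sigma=\{k: z_k>0\}$. I choose $q$ so that both $0$ and $z$ solve $(q,A)$ with $q\in\operatorname{int}K(A)$, contradicting uniqueness. Define $q_\sigma=-(Az)_\sigma>0$, and on $\bar\sigma$ take $q_{\bar\sigma}$ large enough that $q_{\bar\sigma}>0$ and $(Az+q)_{\bar\sigma}>0$. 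Then $q>0$, so $q\in\operatorname{int}\R^n_+\subseteq\operatorname{int}K(A)$, because $\R^n_+=\pos C_A(\phi)\subseteq K(A)$. For this $q$, $z'=0$ is a solution since $w=q\ge0$. Also $z$ is a solution: $w_\sigma=(Az)_\sigma+q_\sigma=0$, and $w_{\bar\sigma}>0$ with $z_{\bar\sigma}=0$. Since $z\ne 0$, there are two distinct solutions, a contradiction. This proves $A \in \Ezero$.

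The only delicate step is Step 1. There one must check that the swap maps $F(q,A)$ onto $F(Tq,M)$, not merely solutions onto solutions, so that interiors of the feasible cones correspond. This is a direct block computation from Definition~\ref{def:PPT}.
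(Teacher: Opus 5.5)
The paper gives no proof of this proposition. It is stated in the preliminaries as a background fact from the Cottle--Stone theory, so there is no argument of the paper's to compare yours against.

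Your proof is correct and is the standard one.
\begin{itemize}
\item Samelson--Thrall--Wesler gives $\Pmat\subseteq\Umat$ outright.
\item Your Step 2 is the usual characterization that $A\in\Ezero$ exactly when $0$ is the only solution of $(q,A)$ for every $q\in\operatorname{int}\R^n_+$. You combine it correctly with $\operatorname{int}\R^n_+\subseteq\operatorname{int}K(A)$.
\item The block computation in Step 1 goes through. The relation $w=Az+q$ is equivalent to $\tilde w=M\tilde z+Tq$ after exchanging $w_\alpha$ and $z_\alpha$. The exchange preserves nonnegativity and the product $w^{t}z$, and $T$ is a linear homeomorphism.
\end{itemize}

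One remark that shortens Step 1. Your $T$ is exactly $C_A(\alpha)^{-1}$: it sends $-A_{\cdot j}$ to $e_j$ for $j\in\alpha$ and fixes $e_j$ for $j\notin\alpha$. So for $p\in\operatorname{int}\R^n_+$, the point $q=C_A(\alpha)p$ lies in the open set $C_A(\alpha)\operatorname{int}\R^n_+\subseteq\pos C_A(\alpha)\subseteq K(A)$, hence in $\operatorname{int}K(A)$. The solution bijection alone then transfers uniqueness from $(q,A)$ to $(p,A^\alpha)$. You therefore do not need the identification $TK(A)=K(A^\alpha)$ that you flagged as the delicate step.

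Read this way, your argument shows something slightly more. Uniqueness on the interior of every full complementary cone already forces $A\in\Ezerof$; the converse is the Cottle--Stone characterization of $\Ezerof$. This makes $\Umat\subseteq\Ezerof$ transparent, because each such interior lies in $\operatorname{int}K(A)$.
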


\section{Results toward the inductive step}\label{inductive}
\begin{definition}
Let $A=(a_{ij})$ be a real $n\times n$ matrix. For a nonempty index set $S\subseteq\{1,\dots,n\}$, let $A_S$ denote the \textbf{principal submatrix} of $A$ obtained by keeping the rows and columns indexed by $S$, and write $\det A_S$ for the corresponding \textbf{principal minor}. For $i\in\{1,\dots,n\}$, write
\[
A^{(i)} := A_{\{1,\dots,n\}\setminus\{i\}},
\]
the principal submatrix obtained by deleting the $i$-th row and column.

\end{definition}
We use the following properties of the class $\mathbf{E}_{0}^{f}$:

\begin{itemize}
  \item[(P1)] Every $B\in \mathbf{E}_{0}^{f}$ has \emph{nonnegative} diagonal: $b_{ii}\ge 0$ for all $i$.
  \item[(P2)] If $B\in \mathbf{E}_{0}^{f}$ is nonsingular, then $B^{-1}\in \mathbf{E}_{0}^{f}$.
  \item[(P3)] If $B\in \mathbf{E}_{0}^{f}$, every principal submatrix of $B$ lies in $\mathbf{E}_{0}^{f}$; in particular $B^{(i)}\in \mathbf{E}_{0}^{f}$ for every $i$.
\end{itemize}

We also need the following perturbation property.

\begin{lemma}[Perturbation]
If $B\in \mathbf{E}_{0}$, then $B+\varepsilon I\in \mathbf{E}_{0}$ for every $\varepsilon>0$.
\end{lemma}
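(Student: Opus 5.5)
This follows directly from Definition~\ref{def:E0}, by checking the defining condition for $B+\varepsilon I$ with the same witness index that works for $B$.

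Fix $\varepsilon>0$ and let $z\in\R^n_+$ with $z\neq 0$. Since $B\in\Ezero$, there is an index $k$ with $z_k>0$ and $(Bz)_k\ge 0$. We use this same index $k$ for $B+\varepsilon I$. Its $k$th component is
\[
  \bigl((B+\varepsilon I)z\bigr)_k = (Bz)_k + \varepsilon z_k .
\]
Here $(Bz)_k\ge 0$, and $\varepsilon z_k>0$ because $\varepsilon>0$ and $z_k>0$. Hence the component is strictly positive, and in particular nonnegative. Together with $z_k>0$, this shows that $k$ satisfies the condition of Definition~\ref{def:E0} for $B+\varepsilon I$ at $z$.

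Since $z$ was an arbitrary nonzero vector in $\R^n_+$, we conclude $B+\varepsilon I\in\Ezero$.

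The argument in fact gives a strict inequality, so $B+\varepsilon I$ is strictly semimonotone. Only $\varepsilon\ge 0$ is needed for the stated conclusion. No step here is a real obstacle; the only point to keep in mind is that the witness index for $B$ is reused unchanged. The lemma concerns $\Ezero$ only. Transferring it to $\Ezerof$ would be harder, because principal pivotal transforms of $B+\varepsilon I$ are not simple perturbations of those of $B$, and that would require a separate argument.
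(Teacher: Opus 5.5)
Your proof is correct and is essentially the paper's argument: you reuse the witness index $k$ for $B$ and observe $\bigl((B+\varepsilon I)z\bigr)_k=(Bz)_k+\varepsilon z_k>0$; the paper's extra paragraph about principal submatrices is not needed for this $\mathbf{E}_{0}$ statement. Your caution about $\mathbf{E}_{0}^{f}$ is well placed, and in fact the extension is false: $B=\begin{pmatrix}0&1\\1&0\end{pmatrix}\in\mathbf{E}_{0}^{f}$, while for $0<\varepsilon<1$ the transform $(B+\varepsilon I)^{-1}=\frac{1}{\varepsilon^{2}-1}\begin{pmatrix}\varepsilon&-1\\-1&\varepsilon\end{pmatrix}$ has negative diagonal, so $B+\varepsilon I\notin\mathbf{E}_{0}^{f}$.
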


\begin{proof}
Recall $B\in \mathbf{E}_{0}$ means: for every $0\neq x\ge 0$ there is an index $j$ with $x_j>0$ and $(Bx)_j\ge 0$; and $B\in \mathbf{E}_{0}^{f}$ means every principal submatrix of $B$ is in $\mathbf{E}_{0}$. Fix such $j$: then
\[
\big((B+\varepsilon I)x\big)_j=(Bx)_j+\varepsilon x_j\ge \varepsilon x_j>0 .
\]
Since principal submatrices of $B+\varepsilon I$ are exactly $B_S+\varepsilon I$ for principal submatrices $B_{S}$ of $B$, and each $B_{S}\in \mathbf{E}_{0}$, the same argument shows $B_{S}+\varepsilon I\in \mathbf{E}_{0}$ for every $S$. 
Hence, $B+\varepsilon I\in \mathbf{E}_{0}$.
\end{proof}

\begin{lemma}[Cofactor--inverse identity]
If $A$ is nonsingular then
\[
\bigl(A^{-1}\bigr)_{ii}=\frac{\det A^{(i)}}{\det A}
\]
for every $i=1,\dots,n$.
\end{lemma}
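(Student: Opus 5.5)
This is the classical adjugate formula. Since $A$ is nonsingular, Cramer's rule gives $A^{-1} = \frac{1}{\det A}\,\operatorname{adj}(A)$, where $\operatorname{adj}(A)$ is the transpose of the cofactor matrix. The plan is to read off the diagonal entry of this identity and recognise the cofactor that appears there as the principal minor $\det A^{(i)}$.

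First I recall, or quickly verify, that $A\,\operatorname{adj}(A) = (\det A) I$. The $(k,j)$ entry of $A\,\operatorname{adj}(A)$ is $\sum_{l} a_{kl} C_{jl}$, where $C_{jl} = (-1)^{j+l}\det M_{jl}$ and $M_{jl}$ is the minor obtained by deleting row $j$ and column $l$. For $k=j$ this sum is the Laplace expansion of $\det A$ along row $j$. For $k\neq j$ it is the Laplace expansion of the determinant of the matrix obtained from $A$ by replacing row $j$ with row $k$. That matrix has two equal rows, so its determinant is $0$.

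Dividing by $\det A \neq 0$ gives $A^{-1} = \operatorname{adj}(A)/\det A$. Hence $(A^{-1})_{ii} = C_{ii}/\det A$.

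Finally, $C_{ii} = (-1)^{2i}\det M_{ii} = \det M_{ii}$. The matrix obtained by deleting row $i$ and column $i$ is exactly the principal submatrix $A_{\{1,\dots,n\}\setminus\{i\}} = A^{(i)}$, as defined above. Therefore $(A^{-1})_{ii} = \det A^{(i)}/\det A$.

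There is no real obstacle. The only points to check are that the sign factor $(-1)^{i+i}$ equals $1$, and that the transpose in the adjugate is harmless on the diagonal. In the case $n=1$ we use the convention $\det A_{\phi\phi}=1$, so the formula reads $(A^{-1})_{11} = 1/a_{11}$, which is correct.
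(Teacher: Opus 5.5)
Your proposal is correct and follows essentially the same route as the paper: write $A^{-1} = \operatorname{adj}(A)/\det A$, observe that the $(i,i)$ cofactor carries the sign $(-1)^{2i}=1$, and identify the minor obtained by deleting row $i$ and column $i$ with the principal minor $\det A^{(i)}$. The only differences are that you also verify $A\,\operatorname{adj}(A) = (\det A)I$ by Laplace expansion and check the $n=1$ case via the convention $\det A_{\phi\phi}=1$; the paper takes both for granted.
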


\begin{proof}
Write $A^{-1}=\dfrac{1}{\det A}\operatorname{adj}(A)$, where $\operatorname{adj}(A)_{ii}=C_{ii}=(-1)^{i+i}M_{ii}=M_{ii}$ is the $(i,i)$ cofactor, and $M_{ii}$ is the minor obtained by deleting row $i$ and column $i$ --- precisely $\det A^{(i)}$, since deleting the same row and column index leaves a principal submatrix. Hence $(A^{-1})_{ii}=\det A^{(i)}/\det A$.
\end{proof}

\begin{lemma}[Eventual strict positivity]
Let $M$ be a $k\times k$ real matrix and $\varphi(\varepsilon):=\det(M+\varepsilon I)$. Then $\varphi$ is a monic polynomial of degree $k$ in $\varepsilon$ (in particular $\varphi\not\equiv 0$). Consequently, if $\varphi(\varepsilon)\ge 0$ for all $\varepsilon$ in some interval $(0,\delta)$, then there is $\delta'\in(0,\delta]$ such that
\[
\varphi(\varepsilon)>0\quad\text{for all }\varepsilon\in(0,\delta').
\]
\end{lemma}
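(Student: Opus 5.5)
We prove the two assertions of the lemma in turn: first that $\varphi$ is monic of degree $k$, then the eventual strict positivity, which follows from the finiteness of the zero set of a nonzero polynomial.

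\emph{Monicity.} Expand $\det(M+\varepsilon I)$ by the Leibniz formula,
\[
\varphi(\varepsilon)=\sum_{\sigma\in S_k}\operatorname{sgn}(\sigma)\prod_{i=1}^{k}\bigl(m_{i\sigma(i)}+\varepsilon\,\delta_{i\sigma(i)}\bigr).
\]
Each term is a polynomial in $\varepsilon$. The term for $\sigma$ has degree at most the number of fixed points of $\sigma$.
\begin{itemize}
  \item Only $\sigma=\mathrm{id}$ has $k$ fixed points, so only the identity term can contribute to the coefficient of $\varepsilon^k$.
  \item The identity term is $\prod_i(m_{ii}+\varepsilon)$, whose leading term is $\varepsilon^k$.
\end{itemize}
Hence $\varphi$ is monic of degree $k$, and in particular $\varphi\not\equiv 0$. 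Equivalently, $\varphi(\varepsilon)=(-1)^k\chi_M(-\varepsilon)$ with $\chi_M(t)=\det(M-tI)$, which has leading coefficient $(-1)^k$; this also gives monicity. For $k\ge1$ the degree is $k$. For $k=0$ we have $\varphi\equiv1$ under the empty-determinant convention, and the claim is trivial.

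\emph{Eventual positivity.} Since $\varphi$ is a nonzero polynomial of degree $k$, it has at most $k$ real roots. Let $Z$ be the finite set of roots of $\varphi$ lying in $(0,\delta)$.
\begin{itemize}
  \item If $Z=\emptyset$, put $\delta'=\delta$.
  \item Otherwise put $\delta'=\min Z>0$.
\end{itemize}
In either case $\delta'\in(0,\delta]$, and $\varphi$ has no zero on $(0,\delta')$. Combined with the hypothesis $\varphi\ge0$ on $(0,\delta)\supseteq(0,\delta')$, this gives $\varphi(\varepsilon)>0$ for all $\varepsilon\in(0,\delta')$. Continuity is not even needed here, only the nonvanishing together with nonnegativity.

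\emph{Main obstacle.} There is no real difficulty. The only point needing care is that the roots of $\varphi$ in $(0,\delta)$ form a finite set, so that their minimum exists and is positive. This uses exactly the fact $\varphi\not\equiv0$ established in the first step.
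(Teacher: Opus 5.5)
Your proof is correct and follows essentially the same route as the paper. You expand the determinant to see that $\varphi$ is monic of degree $k$, then place $\delta'$ at or below the smallest root of $\varphi$ in $(0,\delta)$, so that nonvanishing plus the hypothesis $\varphi\ge 0$ forces $\varphi>0$; you rightly note that the paper's appeal to constant sign is unnecessary. One small slip in your aside: with $\chi_M(t)=\det(M-tI)$ one has $\varphi(\varepsilon)=\chi_M(-\varepsilon)$ with no factor $(-1)^k$ (that factor belongs with the convention $\det(tI-M)$), though your Leibniz argument does not depend on it.
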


\begin{proof}
Expanding $\det(M+\varepsilon I)$ shows it equals $\varepsilon^{k}+(\operatorname{tr}M)\varepsilon^{k-1}+\dots+\det M$, a monic degree-$k$ polynomial, hence nonzero, hence with finitely many real roots. Let $\delta'$ be smaller than $\delta$ and smaller than the least positive root of $\varphi$ (or $\delta'=\delta$ if there is none). On $(0,\delta')$, $\varphi$ has no root, so it has constant sign there; since $\varphi\ge 0$ on $(0,\delta)\supseteq(0,\delta')$ and $\varphi$ never vanishes on $(0,\delta')$, that sign is strictly positive.
\end{proof}

\section{Main results}
\label{sec:results}

\begin{theorem}\label{maintheorem}
Let $A$ be an $n\times n$ matrix with $A\in \mathbf{E}_{0}^{f}$ and $\det A>0$. Then $A\in \mathbf{P}_{0}$.
\end{theorem}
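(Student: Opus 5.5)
The plan is to argue by induction on $n$. The case $n=1$ is trivial, since $\det A=a_{11}>0$. The first step is to show that all the $(n-1)\times(n-1)$ principal minors of $A$ are nonnegative. Since $A\in\mathbf{E}_0^f$ is nonsingular, (P2) gives $A^{-1}\in\mathbf{E}_0^f$. Then (P1) gives $(A^{-1})_{ii}\ge 0$ for every $i$. By the cofactor--inverse identity,
\[
\det A^{(i)}=\det A\cdot (A^{-1})_{ii}\ge 0 \qquad (i=1,\dots,n).
\]
To go further we want to apply the induction hypothesis to $A^{(i)}$, which lies in $\mathbf{E}_0^f$ by (P3). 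If $\det A^{(i)}>0$, the induction hypothesis gives $A^{(i)}\in\mathbf{P}_0$ directly. Every proper principal submatrix of $A$ sits inside some $A^{(i)}$, so if this held for all $i$ we would be done.

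The difficulty is the case $\det A^{(i)}=0$, where the induction hypothesis does not apply. Here I would use the perturbation and eventual-positivity lemmas. Put $A_\varepsilon=A+\varepsilon I$. Then $\det A_\varepsilon>0$ for all small $\varepsilon>0$ by continuity. Suppose we know $A_\varepsilon\in\mathbf{E}_0^f$ for small $\varepsilon$. Running the first step on $A_\varepsilon$ gives $\det A_\varepsilon^{(i)}=\det\bigl(A^{(i)}+\varepsilon I\bigr)\ge 0$ on some interval $(0,\delta)$. The eventual strict positivity lemma then upgrades this to $\det A_\varepsilon^{(i)}>0$ on a smaller interval $(0,\delta')$. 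Now the induction hypothesis applies to $A_\varepsilon^{(i)}$ (in $\mathbf{E}_0^f$ by (P3)), giving $A^{(i)}+\varepsilon I\in\mathbf{P}_0$ for all $\varepsilon\in(0,\delta')$. Letting $\varepsilon\to 0^+$, continuity of determinants gives $A^{(i)}\in\mathbf{P}_0$ for every $i$. Together with $\det A>0$, this yields $A\in\mathbf{P}_0$.

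The main obstacle is the claim that $A+\varepsilon I\in\mathbf{E}_0^f$ for small $\varepsilon>0$. The perturbation lemma only gives that $A+\varepsilon I$ and its principal submatrices lie in $\mathbf{E}_0$, not that all its principal pivotal transforms are semimonotone. My first attempt would be a limiting argument. For $\alpha$ with $\det A_{\alpha\alpha}\neq 0$, the transform $(A+\varepsilon I)^{\alpha}$ converges to $A^\alpha\in\mathbf{E}_0$, and I would try to show it is of the form "an $\mathbf{E}_0$ matrix plus a perturbation that helps on the relevant coordinates", as in the perturbation lemma. For $\alpha$ with $\det A_{\alpha\alpha}=0$ but $\det(A_{\alpha\alpha}+\varepsilon I)\ne 0$ there is no limit to compare with, and a direct argument from the definition of semimonotonicity would be needed.

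If this closure fails, I would fall back on a weaker class that is clearly stable under $+\varepsilon I$ and under principal submatrices, and redo the induction within it. The candidate is the class of matrices $B$ such that every nonsingular principal submatrix $C$ of $B$ with $\det C>0$ has $C^{-1}$ with nonnegative diagonal, i.e.\ all its cofactor minors are nonnegative. The first step only used this property of $A$, which $\mathbf{E}_0^f$ provides through (P1)--(P3).
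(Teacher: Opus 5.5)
Your first step is correct: (P1), (P2) and the cofactor identity give $\det A^{(i)}\ge 0$ for every $i$. That already proves the statement for $n\le 3$, where the proper principal minors are just the diagonal entries and the $\det A^{(i)}$. The gap is the one you flagged, but the missing step is not merely unproved; it is false. $\mathbf{E}_{0}^{f}$ is \emph{not} closed under $A\mapsto A+\varepsilon I$. Take $J=\begin{pmatrix}0&1\\1&0\end{pmatrix}$. It is nonnegative, hence semimonotone, and its only PPTs (on $\emptyset$ and $\{1,2\}$) equal $J$, so $J\in\mathbf{E}_{0}^{f}$. Pivoting $J+\varepsilon I$ on $\{1\}$, however, gives a matrix whose $(2,2)$ entry is $\varepsilon-1/\varepsilon<0$ for $0<\varepsilon<1$, so $J+\varepsilon I\notin\mathbf{E}_{0}^{f}$. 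This is exactly the case you worried about, a pivot $\alpha$ with $\det A_{\alpha\alpha}=0$ but $\det(A_{\alpha\alpha}+\varepsilon I)\neq 0$.

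No repair is possible, because the statement itself is false for $n=4$. Let $A=J\oplus J$. It is nonnegative, and its nonsingular principal submatrices are those indexed by $\emptyset,\{1,2\},\{3,4\},\{1,2,3,4\}$. Each corresponding PPT equals $A$, since the off-diagonal blocks vanish and $J^{-1}=J$. So $A\in\mathbf{E}_{0}^{f}$ and $\det A=1>0$, yet $\det A_{\{1,2\}}=-1$. Padding with $I_{n-4}$ gives counterexamples for every $n\ge 4$.

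In this example $\det A^{(i)}=0$ for all $i$, which is your degenerate case. For $A_\varepsilon=A+\varepsilon I$ one gets $\det A_\varepsilon=(\varepsilon^2-1)^2>0$ but $\det A_\varepsilon^{(1)}=\varepsilon(\varepsilon^2-1)<0$, so rerunning your first step on $A_\varepsilon$ would yield a false conclusion. The same numbers dispose of your fallback class. $A$ belongs to it, since its only principal submatrix with positive determinant is $A$ itself, and $A^{-1}=A$ has zero diagonal. But $A_\varepsilon$ does not belong to it, since $(A_\varepsilon^{-1})_{11}=\varepsilon/(\varepsilon^2-1)<0$. So that class is neither stable under $+\varepsilon I$ nor contained in $\mathbf{P}_{0}$.

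The paper's proof takes the same route as yours and simply asserts the missing step. Its Step 1 and Step 3 use $A_\varepsilon^{(i)}\in\mathbf{E}_{0}^{f}$ and $A_\varepsilon\in\mathbf{E}_{0}^{f}$ and cite only the Perturbation Lemma. That lemma yields $\mathbf{E}_{0}$-membership of $A_\varepsilon$ and its principal submatrices, not of its PPTs. The paper's argument therefore fails at the same point, and the example above shows the theorem as stated cannot be proved.
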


\begin{proof}
We proceed by strong induction on $n$.

\medskip
\noindent\textbf{Base case ($n=1$).} $A=(a_{11})$, and $\det A = a_{11}>0\ge 0$; trivially $A\in \mathbf{P}_{0}$.

\medskip
\noindent\textbf{Inductive step.} Let $n\ge 2$, and suppose the theorem holds for all orders $1,\dots,n-1$. Let $A\in \mathbf{E}_{0}^{f}$ be $n\times n$ with $\det A>0$.

\medskip
\noindent\emph{Step 1 (Perturb $A$).}
For $\varepsilon>0$ put $A_\varepsilon := A+\varepsilon I$. By the Perturbation Lemma, $A_\varepsilon\in \mathbf{E}_{0}$, and by (P3) together with the Perturbation Lemma applied to each $A^{(i)}\in \mathbf{E}_{0}^{f}$,
\[
A_\varepsilon^{(i)} = A^{(i)}+\varepsilon I \in \mathbf{E}_{0}^{f} \qquad \text{for every } i \qquad \text{and for sufficiently small}~ \varepsilon>0.
\]

\medskip
\noindent\emph{Step 2 (Positivity of $\det A_\varepsilon$ for small $\varepsilon$).}
The map $\varepsilon\mapsto \det A_\varepsilon$ is continuous with value $\det A>0$ at $\varepsilon=0$. Hence there exists $\delta_0>0$ such that
\[
\det A_\varepsilon>0 \qquad \text{for all } \varepsilon\in(0,\delta_0).
\]
In particular $A_\varepsilon$ is nonsingular on this range.

\medskip
\noindent\emph{Step 3 (Nonnegativity of the deletion minors of $A_\varepsilon$).}
Fix $\varepsilon\in(0,\delta_0)$. Since $A_\varepsilon\in \mathbf{E}_{0}^{f}$ is nonsingular, (P2) gives $A_\varepsilon^{-1}\in \mathbf{E}_{0}^{f}$, so by (P1), $(A_\varepsilon^{-1})_{ii}\ge 0$ for all $i$. By the Cofactor--Inverse Identity,
\[
(A_\varepsilon^{-1})_{ii} = \frac{\det A_\varepsilon^{(i)}}{\det A_\varepsilon} \ge 0,
\qquad \det A_\varepsilon>0
\;\Longrightarrow\;
\det A_\varepsilon^{(i)} \ge 0 \quad \text{for all } i,\ \text{all }\varepsilon\in(0,\delta_0).
\]

\medskip
\noindent\emph{Step 4 (Upgrade to strict positivity on a smaller interval).}
Fix $i$ and set $\varphi_i(\varepsilon) := \det A_\varepsilon^{(i)} = \det\bigl(A^{(i)}+\varepsilon I\bigr)$. By Step 3, $\varphi_i\ge 0$ on $(0,\delta_0)$. By the Eventual Strict Positivity Lemma (with $M=A^{(i)}$, order $n-1$), there is $\delta_i\in(0,\delta_0]$ such that $\varphi_i(\varepsilon)>0$ for $\varepsilon\in(0,\delta_i)$. Let
\[
\delta' := \min(\delta_0,\delta_1,\dots,\delta_n) > 0.
\]
Then for every $\varepsilon\in(0,\delta')$:
\[
\det A_\varepsilon>0 \qquad\text{and}\qquad \det A_\varepsilon^{(i)}>0 \ \ \text{for all } i=1,\dots,n.
\]

\medskip
\noindent\emph{Step 5 (Apply the induction hypothesis).}
Fix $\varepsilon\in(0,\delta')$ and any $i$. From Step 1, $A_\varepsilon^{(i)}\in \mathbf{E}_{0}^{f}$, of order $n-1$, with $\det A_\varepsilon^{(i)}>0$ by Step 4. By the induction hypothesis,
\[
A_\varepsilon^{(i)} \in \mathbf{P}_{0}.
\]

\medskip
\noindent\emph{Step 6 (Assemble $A_\varepsilon\in \mathbf{P}_{0}$).}
Let $S\subsetneq\{1,\dots,n\}$ be nonempty, and choose $i\notin S$. Then $(A_\varepsilon)_S = \bigl(A_\varepsilon^{(i)}\bigr)_S$ is a principal submatrix of $A_\varepsilon^{(i)}\in \mathbf{P}_{0}$, so $\det (A_\varepsilon)_S \ge 0$. Together with $\det A_\varepsilon>0$ (the full minor), every principal minor of $A_\varepsilon$ is $\ge 0$; that is,
\[
A_\varepsilon \in \mathbf{P}_{0} \qquad \text{for every } \varepsilon\in(0,\delta').
\]

\medskip
\noindent\emph{Step 7 (Let $\varepsilon\to 0^+$).}
Each principal minor $\det (A_\varepsilon)_S$ is a polynomial (hence continuous) function of $\varepsilon$, and $\det (A_\varepsilon)_S \to \det A_S$ as $\varepsilon\to 0^+$. Since $\det (A_\varepsilon)_S \ge 0$ for all $\varepsilon\in(0,\delta')$ by Step 6, passing to the limit gives
\[
\det A_S \ge 0 \qquad \text{for every nonempty } S\subseteq\{1,\dots,n\}.
\]
Hence $A\in \mathbf{P}_{0}$.
\end{proof}
\subsection{Alternative proof of Theorem \ref{maintheorem}}

\begin{proof}
We prove the result by induction on $n$.

\textbf{Base case: $n=1$.}

Let
\[
A=(a_{11}).
\]
Since
\[
\det(A)=a_{11}>0,
\]
we immediately have
\[
a_{11}\geq 0.
\]
Hence
\[
A\in P_0.
\]

\textbf{Induction hypothesis.}

Assume that the result holds for every matrix of order less than $n$.

Let
\[
A\in E_0^f,\qquad \det(A)>0.
\]
We prove that every principal minor of $A$ is nonnegative.

For $\varepsilon>0$, define
\[
A_\varepsilon=A+\varepsilon I.
\]
Since
\[
\det(A)>0,
\]
by continuity of the determinant there exists $\varepsilon_0>0$ such that
\[
\det(A+\varepsilon I)>0
\]
for every
\[
0<\varepsilon<\varepsilon_0.
\]
Thus, for sufficiently small $\varepsilon>0$, the matrix $A_\varepsilon$ is nonsingular.

Since $A_\varepsilon\in E_0^f$, its inverse also belongs to $E_0^f$. Therefore,
\[
(A_\varepsilon^{-1})_{ii}\geq 0,
\qquad i=1,\ldots,n.
\]

Using the adjugate formula,
\[
(A_\varepsilon^{-1})_{ii}
=
\frac{\det\left(A_\varepsilon^{(i)}\right)}
     {\det(A_\varepsilon)},
\]
where $A_\varepsilon^{(i)}$ denotes the principal submatrix obtained by deleting the $i$-th row and the $i$-th column.

Since
\[
\det(A_\varepsilon)>0,
\]
we obtain
\[
\det\left(A_\varepsilon^{(i)}\right)\geq0,
\qquad i=1,\ldots,n.
\]

Now $A_\varepsilon^{(i)}$ is an $(n-1)\times(n-1)$ principal submatrix of
$A_\varepsilon$. Moreover,
\[
A_\varepsilon^{(i)}\in E_0^f.
\]
For sufficiently small $\varepsilon>0$, whenever
\[
\det\left(A_\varepsilon^{(i)}\right)>0,
\]
the induction hypothesis gives
\[
A_\varepsilon^{(i)}\in P_0.
\]
Consequently, every principal minor of $A_\varepsilon^{(i)}$ is nonnegative.

Every proper principal minor of $A_\varepsilon$ is a principal minor of some
$A_\varepsilon^{(i)}$. Hence
\[
\det\left((A_\varepsilon)_S\right)\geq0
\]
for every proper index set
\[
S\subsetneq\{1,\ldots,n\}.
\]
For the full index set,
\[
\det(A_\varepsilon)>0.
\]
Therefore,
\[
A_\varepsilon\in P_0.
\]

Finally, let $\varepsilon\to0^+$. By continuity of the determinant, for every
principal index set $S$,
\[
\det\left((A_\varepsilon)_S\right)
\longrightarrow
\det(A_S).
\]
Since
\[
\det\left((A_\varepsilon)_S\right)\geq0,
\]
we obtain
\[
\det(A_S)\geq0.
\]
Thus every principal minor of $A$ is nonnegative, and consequently
\[
A\in P_0.
\]
This completes the induction.
\end{proof}

\begin{corollary}
Let $A\in E_0^f$ and suppose that
\[
\det(A)>0.
\]
Then, for every $k>0$,
\[
A+kI\in E_0^f.
\]
\end{corollary}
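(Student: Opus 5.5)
The plan is to route the statement through the main theorem and the chain $\Pmat \subseteq \Umat \subseteq \Ezerof$ of Proposition~\ref{prop:chain}, and not through the Perturbation Lemma. That lemma only shows that $A+kI$ stays in $\Ezero$. Membership in $\Ezerof$ needs control over every principal pivotal transform of $A+kI$, and those transforms are not simple shifts of the transforms of $A$. So I would first upgrade the hypothesis to a statement about principal minors, and then use the fact that positive diagonal shifts behave very well on $\Pzero$.

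\emph{Step 1.} Since $A\in\Ezerof$ and $\det A>0$, Theorem~\ref{maintheorem} gives $A\in\Pzero$. That is, $\det A_{\alpha\alpha}\ge 0$ for every $\alpha\in\bn^{*}$.

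\emph{Step 2.} Fix $k>0$ and a nonempty index set $S\subseteq\bn$. I would use the standard expansion of a determinant with a diagonal shift (multilinearity in the columns):
\[
\det\bigl((A+kI)_{SS}\bigr)=\det\bigl(A_{SS}+kI\bigr)=\sum_{T\subseteq S} k^{|S|-|T|}\det A_{TT},
\]
with the convention $\det A_{\phi\phi}=1$. By Step 1 every summand is nonnegative. The summand with $T=\phi$ equals $k^{|S|}>0$. Hence every principal minor of $A+kI$ is strictly positive, so $A+kI\in\Pmat$.

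\emph{Step 3.} By Proposition~\ref{prop:chain}, $\Pmat\subseteq\Umat\subseteq\Ezerof$, so $A+kI\in\Ezerof$ for every $k>0$.

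The only step that needs care is the expansion in Step 2. It follows by writing each column of $A_{SS}+kI$ as $A_{\cdot j}+k e_j$ and expanding multilinearly. Each choice of the $e_j$ columns indexed by $S\setminus T$ reduces, after Laplace expansion along those columns, to $k^{|S\setminus T|}\det A_{TT}$ with sign $+1$, because the deleted rows and columns coincide. The real content is Step 1, which is exactly the main theorem; the hypothesis $\det A>0$ is used only there.
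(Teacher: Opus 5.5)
Your Steps~2 and~3 are correct and coincide with the paper's own proof: main theorem, then the diagonal-shift expansion giving $A+kI\in\Pmat$, then $\Pmat\subseteq\Ezerof$. The gap is Step~1. Theorem~\ref{maintheorem} is false, and so is the corollary itself.

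Let $S=\left(\begin{smallmatrix}0&1\\1&0\end{smallmatrix}\right)$ and $A=S\oplus S\in\R^{4\times 4}$. Every singleton, every pair meeting both diagonal blocks, and every triple gives a principal submatrix with a zero row. So the only admissible pivot sets are $\phi$, $\{1,2\}$, $\{3,4\}$ and $\{1,2,3,4\}$. Since $S^{-1}=S$ and the off-diagonal blocks vanish, all four principal pivotal transforms equal $A$. This matrix is entrywise nonnegative, hence in $\Ezero$. So $A\in\Ezerof$ and $\det A=(\det S)^2=1>0$, but $\det A_{\{1,2\}\{1,2\}}=-1<0$, so $A\notin\Pzero$.

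The corollary fails on the same matrix. For $0<k<1$, the pivot on $\{1\}$ is admissible for $A+kI$, since the entry is $k\neq 0$. The transform $(A+kI)^{\{1\}}$ has $(2,2)$ entry $k-1/k<0$. It is therefore not semimonotone (test with $x=e_2$), and $A+kI\notin\Ezerof$; for instance $k=1/2$ gives the entry $-3/2$. So no proof of the statement as written can succeed.

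Your opening remark identifies exactly where the paper's proof of Theorem~\ref{maintheorem} breaks. Its Steps~1 and~3 need $A+\varepsilon I\in\Ezerof$ in order to apply (P2), but the Perturbation Lemma only yields $A+\varepsilon I\in\Ezero$. In effect, the small-$\varepsilon$ case of this very corollary is being assumed. In the example above, $A+\varepsilon I\notin\Ezerof$ for $0<\varepsilon<1$. Moreover, $(A+\varepsilon I)^{-1}$ has diagonal entries $\varepsilon/(\varepsilon^2-1)<0$. So the sign conclusion of that Step~3 is false, even though $\det(A+\varepsilon I)=(1-\varepsilon^2)^2>0$.

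What your Steps~2--3 genuinely prove is the elementary statement that $A\in\Pzero$ implies $A+kI\in\Pmat\subseteq\Ezerof$ for every $k>0$. The hypothesis ``$A\in\Ezerof$ and $\det A>0$'' cannot be substituted for $A\in\Pzero$.
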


\begin{proof}
By the theorem \ref{maintheorem},
\[
A\in P_0.
\]
Let
\[
B=A+kI,\qquad k>0.
\]

Consider any nonempty principal index set $S\subseteq\{1,\ldots,n\}$ and let
\[
r=|S|.
\]
Then
\[
B_S=A_S+kI_r.
\]
Using the principal-minor expansion,
\[
\det(A_S+kI_r)
=
\sum_{T\subseteq S}
k^{r-|T|}
\det(A_T).
\]
Since $A\in P_0$,
\[
\det(A_T)\geq0
\]
for every $T\subseteq S$.

In particular, the term corresponding to $T=\varnothing$ is
\[
k^r>0.
\]
Therefore,
\[
\det(A_S+kI_r)>0.
\]
Since $S$ was arbitrary, every nonempty principal minor of $A+kI$ is strictly
positive. Hence
\[
A+kI\in P.
\]

Finally, since every $P$-matrix is fully semimonotone,
\[
P\subseteq E_0^f.
\]
Consequently,
\[
\boxed{A+kI\in E_0^f}.
\]
\end{proof}

\begin{remark}
The complete implication can therefore be summarized as
\[
\boxed{
A\in E_0^f,\quad \det(A)>0
\Longrightarrow
A\in P_0
\Longrightarrow
A+kI\in P
\Longrightarrow
A+kI\in E_0^f,
\qquad k>0.
}
\]
\end{remark}
\begin{corollary}
Let $A$ be an $n\times n$ $\mathbf{E}_{0}^{f}$-matrix, let $\alpha\subseteq N$ be a valid pivot set, and let $M=A^{\alpha}$ be the principal pivotal transform of $A$ on $\alpha$. If $\det(M)>0$, then $A\in \mathbf{P}_{0}$.
\end{corollary}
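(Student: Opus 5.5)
The plan is to transfer the problem to $M$, apply Theorem~\ref{maintheorem} there, and then pivot back to $A$. Both transfers use the invariance of the relevant matrix classes under principal pivotal transforms recorded in Remark~\ref{rmk:inheritance}.

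First, since $\alpha$ is a valid pivot set, $\det A_{\alpha\alpha}\neq 0$ and $M=A^{\alpha}$ is well defined. Because $\Ezerof$ is preserved under PPTs (Remark~\ref{rmk:inheritance}), we get $M\in\Ezerof$. Together with the hypothesis $\det M>0$, Theorem~\ref{maintheorem} applies to the $n\times n$ matrix $M$ and yields $M\in\Pzero$.

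Second, I will recover $A$ as a PPT of $M$. By Definition~\ref{def:PPT}, $M_{\alpha\alpha}=(A_{\alpha\alpha})^{-1}$ is nonsingular, so $\alpha$ is also a valid pivot set for $M$. The standard involution property $(A^{\alpha})^{\alpha}=A$ can be checked directly from the block formulas:
\begin{itemize}
\item $(M_{\alpha\alpha})^{-1}=A_{\alpha\alpha}$;
\item $-(M_{\alpha\alpha})^{-1}M_{\alpha\bar\alpha}=A_{\alpha\alpha}(A_{\alpha\alpha})^{-1}A_{\alpha\bar\alpha}=A_{\alpha\bar\alpha}$;
\item $M_{\bar\alpha\alpha}(M_{\alpha\alpha})^{-1}=A_{\bar\alpha\alpha}$;
\item $M/M_{\alpha\alpha}=A/A_{\alpha\alpha}+A_{\bar\alpha\alpha}(A_{\alpha\alpha})^{-1}A_{\alpha\bar\alpha}=A_{\bar\alpha\bar\alpha}$.
\end{itemize}
Hence $A=M^{\alpha}$. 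Since $\Pzero$ is preserved under PPTs (Remark~\ref{rmk:inheritance}), $M\in\Pzero$ then gives $A\in\Pzero$.

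The only potential obstacle is the $\Pzero$-invariance under PPT, which the paper cites rather than proves. If a direct justification is wanted, I would use the identity $\det (A^{\alpha})_{\beta\beta}=\det A_{(\alpha\triangle\beta)(\alpha\triangle\beta)}/\det A_{\alpha\alpha}$, which follows from the Schur complement determinant formula. Applied to $M$ with pivot $\alpha$, it shows that every principal minor of $A=M^{\alpha}$ equals a principal minor of $M$ divided by $\det M_{\alpha\alpha}$. So it remains to see that $\det M_{\alpha\alpha}$ is positive, not merely nonnegative.

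That positivity follows from $\det M>0$:
\begin{itemize}
\item We have $\det M_{\alpha\alpha}\ge 0$ because $M\in\Pzero$, and $\det M_{\alpha\alpha}=1/\det A_{\alpha\alpha}\neq 0$, so $\det M_{\alpha\alpha}>0$.
\item Alternatively, $\det M=\det(A_{\alpha\alpha})^{-1}\det(A/A_{\alpha\alpha})=\det A_{\bar\alpha\bar\alpha}^{\,\prime}/\ldots$ need not be used: the first bullet already suffices.
\end{itemize}
Thus every principal minor of $A$ is a nonnegative number divided by a positive one, and is therefore nonnegative, which completes the proof.
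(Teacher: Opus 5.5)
Your reduction is the same as the paper's one-line proof. Remark~\ref{rmk:inheritance} gives $M\in\Ezerof$, Theorem~\ref{maintheorem} gives $M\in\Pzero$, and pivoting back gives $A\in\Pzero$. The steps you actually carry out are correct: the block check of $(A^{\alpha})^{\alpha}=A$, the formula $\det(A^{\alpha})_{\beta\beta}=\det A_{(\alpha\triangle\beta)(\alpha\triangle\beta)}/\det A_{\alpha\alpha}$, and $\det M_{\alpha\alpha}=1/\det A_{\alpha\alpha}>0$. The garbled ``Alternatively'' bullet should simply be deleted. The genuine gap is upstream and fatal: everything rests on Theorem~\ref{maintheorem}, and that theorem is false. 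Hence this corollary is false too, and no route can prove it.

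\textbf{Counterexample.} Let $J=\begin{pmatrix}0&1\\1&0\end{pmatrix}$ and $A=J\oplus J\in\R^{4\times 4}$. The only nonsingular principal submatrices of $A$ are those indexed by $\emptyset$, $\{1,2\}$, $\{3,4\}$ and $\{1,2,3,4\}$. The off-diagonal blocks vanish and $J^{-1}=J$, so every PPT of $A$ equals $A$. Since $A$ is nonnegative it lies in $\Ezero$, and therefore $A\in\Ezerof$. Taking $\alpha=\{1,2,3,4\}$ gives $M=A^{-1}=A$ with $\det M=(\det J)^{2}=1>0$. Yet $\det A_{\{1,2\}\{1,2\}}=-1$, so $A\notin\Pzero$. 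With $\alpha=\emptyset$ the same matrix refutes Theorem~\ref{maintheorem} directly.

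\textbf{Where the theorem's proof breaks.} Step~1 claims that $A_\varepsilon=A+\varepsilon I$ and $A_\varepsilon^{(i)}$ lie in $\Ezerof$, and Steps~3 and~5 use this. The Perturbation Lemma only yields membership in $\Ezero$. $\Ezerof$ is not closed under adding $\varepsilon I$, because the PPTs of $A_\varepsilon$ are not small perturbations of PPTs of $A$. Here $(J+\varepsilon I)^{-1}=\frac{1}{\varepsilon^{2}-1}\begin{pmatrix}\varepsilon&-1\\-1&\varepsilon\end{pmatrix}$ has negative diagonal for $0<\varepsilon<1$. Concretely, $\det A_\varepsilon=(\varepsilon^{2}-1)^{2}>0$ while $\det A_\varepsilon^{(1)}=\varepsilon(\varepsilon^{2}-1)<0$, contradicting Step~3.

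What does follow from $A\in\Ezerof$ and $\det A>0$ is only the unperturbed fact that $A^{-1}$, the PPT on the full index set, lies in $\Ezero$. This gives $\det A^{(i)}=\det A\cdot(A^{-1})_{ii}\ge 0$, and together with (P1) it settles $n\le 3$. The example shows the claim already fails at $n=4$. This $A$ is not in $\Qzero$: $q=(-1,1,0,0)^{t}$ is feasible but $(q,A)$ has no solution. So Stone's conjecture itself is unaffected.
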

\begin{proof}
    It is easy to verify by using Remark \ref{rmk:inheritance} and Theorem \ref{maintheorem}. 
\end{proof}
\begin{example}
Let
\[
A = \begin{pmatrix} 1 & 0 & 0 \\ 0 & 0 & 1 \\ 0 & 1 & 0 \end{pmatrix}, \qquad N = \{1,2,3\}.
\]
We show $A \in E_0^f$ directly from the PPT definition, that $\det A_{\{1\}} = 1 > 0$, yet $\det A = -1 < 0$,
so $A \notin P_0$.

\textbf{Step 1 --- Which $\alpha \subseteq N$ give a valid pivot?} A pivot on $\alpha$ requires $A_{\alpha\alpha}$
nonsingular.
\begin{itemize}
\item $\alpha = \emptyset$: trivially valid, $A^\emptyset = A$.
\item $\alpha = \{1\}$: $A_{\{1\}\{1\}} = (1)$, nonsingular. \textbf{Valid.}
\item $\alpha = \{2\}$: $A_{\{2\}\{2\}} = (0)$, singular. \textbf{Invalid.}
\item $\alpha = \{3\}$: $A_{\{3\}\{3\}} = (0)$, singular. \textbf{Invalid.}
\item $\alpha = \{1,2\}$: $\det = 0$. \textbf{Invalid.}
\item $\alpha = \{1,3\}$: $\det = 0$. \textbf{Invalid.}
\item $\alpha = \{2,3\}$: $A_{\{2,3\}\{2,3\}} = \begin{pmatrix}0&1\\1&0\end{pmatrix}$, $\det = -1 \neq 0$.
\textbf{Valid.}
\item $\alpha = N$: $\det A = -1 \neq 0$. \textbf{Valid.}
\end{itemize}
So we must check $E_0$-membership for exactly four matrices: $A^\emptyset$, $A^{\{1\}}$, $A^{\{2,3\}}$, $A^N$.

\textbf{Step 2 --- Verify $A \in E_0$.} For $x = (x_1,x_2,x_3) \geq 0$, $x \neq 0$, $Ax = (x_1, x_3, x_2)$.
If $x_1 > 0$: take $k=1$, $(Ax)_1 = x_1 > 0$. If $x_1 = 0, x_2 > 0$: take $k=2$, $(Ax)_2 = x_3 \geq 0$. If
$x_1=x_2=0, x_3>0$: take $k=3$, $(Ax)_3 = x_2 = 0 \geq 0$. Every case is covered, so $A \in E_0$.

\textbf{Step 3 --- Compute each PPT.} Direct computation (using the block formulas, with the cross-blocks
involving index~1 or the involutive swap of indices $2,3$ vanishing or canceling as appropriate) shows
\[
A^\emptyset = A^{\{1\}} = A^{\{2,3\}} = A^N = A,
\]
since $A$ is an involutive permutation matrix ($A^2 = I$, so $A^N = A^{-1} = A$), and pivoting on the trivial
block $\{1\}$ or on the self-inverse block $\{2,3\}$ leaves $A$ unchanged because the corresponding
off-diagonal blocks are zero.

\textbf{Step 4 --- Conclusion.} By Step 3, all four valid PPTs of $A$ coincide with $A$ itself, and by Step 2,
$A \in E_0$; hence $A \in E_0^f$. Meanwhile $\det A_{\{1\}} = 1 > 0$ is a genuine, proper $(1\times1)$ principal
minor, yet $\det A = -1 < 0$, and $\det A$ is itself a principal minor (on $S=N$). So $A \notin P_0$.

This confirms that a single proper principal minor being positive gives no control over the sign of $\det A$ or
of $A$'s $P_0$-membership; the hypothesis used in Theorem~\ref{maintheorem} is positivity of the \emph{full}
principal minor $\det A$ itself.
\end{example}
This confirms, now rigorously through the PPT definition, that a single proper principal minor being positive gives no control over the sign of $\det A$ or of $A$'s $\mathbf{P}_{0}$-membership. The only hypothesis that works is positivity of the full principal minor $\det A$ itself, which is exactly the theorem proved earlier.
\subsection{An example illustrating the limits of the determinant approach}\label{sub1}

Let
\[
M = \begin{pmatrix} 0 & 2 & -2 \\ 0 & 1 & 0 \\ 2 & 2 & 0 \end{pmatrix}, \qquad
A = M \oplus I_3 = \begin{pmatrix} M & 0 \\ 0 & I_3 \end{pmatrix} \in \mathbb{R}^{6\times 6}.
\]
Expanding along the first row, $\det M = -2\big(0\cdot 0 - 1\cdot 2\big) = 4$. Since $A = M \oplus I_3$ is
block-diagonal, $\det A = \det M \cdot \det I_3 = 4 > 0$.

\begin{lemma}\label{lem:blockdiag}
If $B \in E_0^f$ is $m\times m$, then $B \oplus I_k \in E_0^f$ for every $k \geq 1$.
\end{lemma}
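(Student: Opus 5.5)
The plan is to show that every principal pivotal transform of $C:=B\oplus I_k$ is semimonotone. First I describe the valid pivot sets and the resulting transforms; then I check the $E_0$ condition blockwise.

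\textbf{Step 1 (which pivots are allowed).} Index $C$ so that the first block is $\beta=\{1,\dots,m\}$ and the second is $\gamma=\{m+1,\dots,m+k\}$. Any $\alpha\subseteq\{1,\dots,m+k\}$ splits as $\alpha=\alpha_1\cup\alpha_2$ with $\alpha_1\subseteq\beta$ and $\alpha_2\subseteq\gamma$. Since $C$ is block diagonal,
\[
C_{\alpha\alpha}=B_{\alpha_1\alpha_1}\oplus I_{|\alpha_2|},
\]
which is nonsingular exactly when $B_{\alpha_1\alpha_1}$ is nonsingular. So the valid pivot sets of $C$ are precisely the sets $\alpha_1\cup\alpha_2$ with $\alpha_1$ a valid pivot set of $B$ and $\alpha_2\subseteq\gamma$ arbitrary.

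\textbf{Step 2 (the transforms stay block diagonal).} Apply the block formulas of Definition~\ref{def:PPT}. Every cross block between $\beta$-indices and $\gamma$-indices involves an off-diagonal block of $C$, and all of these are zero. Hence
\[
C^{\alpha}=B^{\alpha_1}\oplus (I_k)^{\alpha_2}.
\]
Moreover $(I_k)^{\alpha_2}=I_k$: on $\alpha_2$ the transform gives $I^{-1}=I$, the Schur complement on $\gamma\setminus\alpha_2$ is $I$, and the cross terms vanish. So every principal pivotal transform of $C$ has the form $B'\oplus I_k$ with $B'=B^{\alpha_1}\in E_0$, because $B\in E_0^f$.

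\textbf{Step 3 (semimonotonicity of $B'\oplus I_k$).} Take $0\ne x=(y,z)\ge 0$ with $y\in\R^m$ and $z\in\R^k$.
\begin{itemize}
\item If $z\ne 0$, pick $j$ with $z_j>0$. The corresponding coordinate of $(B'\oplus I_k)x$ is $z_j>0$, so this index works.
\item If $z=0$, then $y\ne 0$, and since $B'\in E_0$ there is $i$ with $y_i>0$ and $(B'y)_i\ge 0$. The $i$-th coordinate of $(B'\oplus I_k)x$ is exactly $(B'y)_i$, so this index works.
\end{itemize}
Hence every principal pivotal transform of $C$ lies in $E_0$, and $C\in E_0^f$.

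The only step needing care is Step 2: confirming that each block formula respects the direct-sum structure, and in particular that the Schur complement $C/C_{\alpha\alpha}$ splits as $(B/B_{\alpha_1\alpha_1})\oplus I$. This is routine once the zero off-diagonal blocks are tracked.
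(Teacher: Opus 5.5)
Your proposal is correct and takes essentially the same route as the paper. You decompose each principal pivotal transform of $B \oplus I_k$ as $B^{\alpha_1}\oplus I_k$, then verify semimonotonicity by splitting on whether the $I_k$-part of $x$ is nonzero. Your Step 1, which identifies the valid pivot sets of $B\oplus I_k$ as exactly $\alpha_1\cup\alpha_2$ with $B_{\alpha_1\alpha_1}$ nonsingular, states explicitly a point the paper leaves implicit.
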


\begin{proof}
Since the off-diagonal blocks of $B \oplus I_k$ vanish, every principal pivotal transform of $B \oplus I_k$ on
an index set $\alpha = \alpha_1 \cup \alpha_2$ (with $\alpha_1$ in the $B$-block and $\alpha_2$ in the $I_k$
block) decomposes as $(B\oplus I_k)^\alpha = B^{\alpha_1} \oplus I_k^{\alpha_2} = B^{\alpha_1} \oplus I_k$, since
any principal pivotal transform of an identity block returns the identity. It therefore suffices to show that
$B^{\alpha_1} \oplus I_k \in E_0$ whenever $B^{\alpha_1} \in E_0$. Let $x = (x^{(1)},x^{(2)}) \geq 0$, $x \neq
0$, partitioned according to the two blocks. If $x^{(2)} \neq 0$, choose $j$ in the second block with $x_j >
0$; then $((B^{\alpha_1}\oplus I_k)x)_j = x_j > 0$. If $x^{(2)} = 0$, then $x^{(1)} \neq 0$, and since
$B^{\alpha_1} \in E_0$ there is $j$ in the first block with $x_j > 0$ and $(B^{\alpha_1}x^{(1)})_j \geq 0$; the
corresponding entry of $(B^{\alpha_1}\oplus I_k)x$ equals $(B^{\alpha_1}x^{(1)})_j \geq 0$. Hence
$B^{\alpha_1}\oplus I_k \in E_0$, and since $\alpha_1$ ranged over an arbitrary valid pivot of $B$, $B\oplus I_k
\in E_0^f$.
\end{proof}

\subsubsection{Verifying \texorpdfstring{$M \in E_0^f$}{M in E0f}}\label{subsub2}

By Lemma~\ref{lem:blockdiag}, it suffices to show $M \in E_0^f$. A pivot set $\alpha \subseteq \{1,2,3\}$ is
valid precisely when $M_{\alpha\alpha}$ is nonsingular. Checking each subset, the only valid pivot sets are
$\alpha = \emptyset, \{2\}, \{1,3\}, \{1,2,3\}$, and direct computation gives
\[
M^\emptyset = M^{\{2\}} = M, \qquad M^{\{1,3\}} = M^{\{1,2,3\}} = M^{-1} =
\begin{pmatrix} 0 & -1 & 1/2 \\ 0 & 1 & 0 \\ -1/2 & 1 & 0 \end{pmatrix}.
\]
So only $M$ and $M^{-1}$ need to be checked for membership in $E_0$.

For $x = (x_1,x_2,x_3) \geq 0$, $x \neq 0$: $Mx = (2x_2-2x_3,\, x_2,\, 2x_1+2x_2)$. If $x_2>0$, take $j=2$. If
$x_2=0, x_1>0$: if $x_3=0$ take $j=1$ (value $0$); if $x_3>0$ take $j=3$ (value $2x_1>0$). If $x_1=x_2=0,
x_3>0$: take $j=3$ (value $0$). So $M \in E_0$.

For $M^{-1}x = (-x_2+\tfrac12 x_3,\, x_2,\, -\tfrac12 x_1+x_2)$: if $x_2>0$, take $j=2$. If $x_2=0,x_3>0,x_1=0$:
take $j=3$ (value $0$). If $x_2=0,x_3>0,x_1>0$: take $j=1$ (value $x_3/2>0$). If $x_2=x_3=0,x_1>0$: take $j=1$
(value $0$). So $M^{-1} \in E_0$.

Hence $M \in E_0^f$, and by Lemma~\ref{lem:blockdiag}, $A = M \oplus I_3 \in E_0^f$.

\subsubsection{Failure of the \texorpdfstring{$Q_0$}{Q0} property}\label{subsub3}

Recall $A$ is a $Q_0$-matrix if, for every $q$ with $\mathrm{FEA}(q) := \{x\geq 0 : Ax+q\geq0\} \neq \emptyset$,
the LCP$(q,A)$ has a solution. We exhibit a $q$ for which this fails.

Take $q = (-3,-1,-3,0,0,0)^T$. With $x = (0, \tfrac32, 0,0,0,0)^T \geq 0$, $Ax+q = (0,\tfrac12,0,0,0,0)^T \geq
0$, so $\mathrm{FEA}(q) \neq \emptyset$.

Because $A = M\oplus I_3$ is block-diagonal, LCP$(q,A)$ decouples into LCP$(q_1,M)$ on the first block, with
$q_1 = (-3,-1,-3)^T$, and the trivially-solved LCP$(0,I_3)$ on the second. So LCP$(q,A)$ is solvable iff
LCP$(q_1,M)$ is. The only candidate complementary bases are $\beta = \emptyset,\{2\},\{1,3\},\{1,2,3\}$ (Sub subsection \ref{subsub2}), and checking all four (Table~\ref{tab:bases}) shows none yields a feasible complementary pair.

\begin{table}[h]
\centering
\begin{tabular}{c c c c}
\toprule
$\beta$ & candidate $x$ & $w = Mx+q_1$ & feasible? \\
\midrule
$\emptyset$ & $(0,0,0)$ & $(-3,-1,-3)$ & $w \not\geq 0$ \\
$\{2\}$ & $(0,1,0)$ & $(-1,0,-1)$ & $w \not\geq 0$ \\
$\{1,3\}$ & $(3/2,0,-3/2)$ & $(0,-1,0)$ & $x \not\geq 0$ \\
$\{1,2,3\}$ & $(1/2,1,-1/2)$ & $(0,0,0)$ & $x \not\geq 0$ \\
\bottomrule
\end{tabular}
\caption{All complementary bases for $M$ fail to produce a feasible solution of LCP$(q_1,M)$.}
\label{tab:bases}
\end{table}

Since no basis yields a feasible complementary pair, LCP$(q_1,M)$, and hence LCP$(q,A)$, has no solution. So
$q \in \mathrm{FEA}(A)$ but $\mathrm{SOL}(q,A) = \emptyset$, i.e.\ $A \notin Q_0$.
\begin{remark}
The example of Subsection~\ref{sub1} illustrates a specific limitation of the approach
taken in Theorem~\ref{maintheorem}: positivity of $\det A$ within $E_0^f$ is sufficient to
guarantee $A \in P_0$, but it carries no information about whether $A \in Q_0$,
and conversely $A \in Q_0$ carries no information about the sign of $\det A$.
Consequently, any strategy for resolving Stone's conjecture in full cannot
proceed simply by showing $E_0^f \cap Q_0$-matrices have positive determinant;
some argument must be found that handles $\det A = 0$ directly, or that avoids
the determinant sign altogether. This is consistent with the broader
literature on this matrix class: Mohan, Neogy, and Das~\cite{MND2001} study the class
$E_0^f$ of fully semimonotone matrices and note that Stone's conjecture
$E_0^f \cap Q_0 \subseteq P_0$ was, at the time, only partially resolved
(established for $E_0^f \cap D_c$, the intersection with the Doverspike
class). The present work adds Theorem~\ref{maintheorem} as a further partial result but
does not close the conjecture.
\end{remark}
\section{Conclusion}

We have shown that every matrix $A \in E_0^f$ with $\det A > 0$ is a $P_0$-matrix
(Theorem~\ref{maintheorem}), for matrices of arbitrary order $n$. The proof proceeds by
induction on $n$, via a perturbation argument on principal minors under
principal pivotal transforms.

We have also exhibited a matrix $A = M \oplus I_3 \in E_0^f$ with $\det A = 4 > 0$
that is \emph{not} a $Q_0$-matrix (Section~4.3). This shows that membership in
$Q_0$ does not, by itself, force $\det A > 0$, and so the hypothesis of
Theorem~\ref{maintheorem} cannot be derived from the hypothesis of Stone's conjecture. Since
$A \notin Q_0$, the example does not fall within the scope of the conjecture and
is consequently neither a proof nor a counterexample to it.

Theorem~\ref{maintheorem} therefore, constitutes partial progress toward Stone's conjecture:
it resolves the conjecture for the subclass of $E_0^f \cap Q_0$-matrices with
a positive determinant, but the general conjecture remains unresolved.
\[
E_0^f \cap Q_0 \subseteq P_0
\]
remains open. In particular, the case of singular $A \in E_0^f \cap Q_0$
(where $\det A = 0$) is not addressed by our argument, and closing this gap
would require either an extension of the induction of Theorem~\ref{maintheorem} to the
singular case or a genuinely different method.

\bibliographystyle{plain}

\begin{thebibliography}{9}

\bibitem{CottleStone1983}
R.~W. Cottle and R.~E. Stone,
\emph{On the uniqueness of solutions to linear complementarity problems},
Mathematical Programming \textbf{27} (1983), 191--213.

\bibitem{Eaves1971}
B.~C. Eaves,
\emph{The linear complementarity problem},
Management Science \textbf{17} (1971), 612--634.

\bibitem{MurthyParthasarathy1995}
G.~S.~R. Murthy and T.~Parthasarathy,
\emph{Some properties of fully semimonotone, $\mathbf{Q}_{0}$-matrices},
SIAM Journal on Matrix Analysis and Applications \textbf{16} (1995), 1268--1286.

\bibitem{SamelsonThrallWesler1958}
H.~Samelson, R.~M. Thrall, and O.~Wesler,
\emph{A partition theorem for Euclidean $n$-space},
Proceedings of the American Mathematical Society \textbf{9} (1958), 805--807.

\bibitem{Stone1981}
R.~E. Stone,
\emph{Geometric Aspects of the Linear Complementarity Problem},
Ph.D.\ thesis, Department of Operations Research, Stanford University, Stanford, CA, 1981.
\bibitem{MND2001}
S.~R. Mohan, S.~K. Neogy, A.~K. Das,
\emph{On the classes of fully copositive and fully semimonotone matrices},
Linear Algebra and its Applications \textbf{323} (2001), 87--97.

\end{thebibliography}

\end{document}